\documentclass{amsart}
\usepackage{amsfonts}
\usepackage{amsmath}
\usepackage{amssymb}
\usepackage{amsthm}
\usepackage{graphicx}
\usepackage{capt-of}
\usepackage{tikz}
\usetikzlibrary{patterns}
\usetikzlibrary{decorations.markings}
\usepackage{multirow}
\usepackage{array} 
\newcolumntype{C}{>{$}c<{$}}

\newtheorem{theorem}{Theorem}
\newtheorem{prop}{Proposition}
\newtheorem{lemma}{Lemma}
\newtheorem{rem}{Remark}
\newtheorem{exmp}{Example}

\begin{document}
\title{Triangulations with homogeneous zigzags}
\author{Mariusz Kwiatkowski, Mark Pankov,  Adam Tyc}
\subjclass[2000]{}
\keywords{directed Eulerian embedding, triangulation of a surface, zigzag}
\address{Mariusz Kwiatkowski: Faculty of Mathematics and Computer Science, 
University of Warmia and Mazury, S{\l}oneczna 54, 10-710 Olsztyn, Poland}
\email{mkw@matman.uwm.edu.pl}

\address{Mark Pankov: Faculty of Mathematics and Computer Science, 
University of Warmia and Mazury, S{\l}oneczna 54, 10-710 Olsztyn, Poland}
\email{pankov@matman.uwm.edu.pl}

\address{Adam Tyc: Institute of Mathematics, Polish Academy of Science, \'Sniadeckich 8, 00-656 Warszawa, Poland}
\email{atyc@impan.pl}

\maketitle
\begin{abstract}
We investigate zigzags in triangulations of connected closed $2$-dimensional surfaces
and show that there is a one-to-one correspondence
between triangulations with homogeneous zigzags and closed $2$-cell embeddings of directed Eulerian graphs in surfaces.
A triangulation is called $z$-knotted if it has a single zigzag.
We construct a family of tree structured $z$-knotted spherical triangulations whose zigzags are homogeneous.
\end{abstract}

\section{Introduction}
Petrie polygons are well-known objects described by Coxeter \cite{Coxeter} (see also \cite{McMSch}).
These are skew polygons in regular polyhedra such that  any two consecutive edges, but not three, are on the same face.
Analogs of Petrie polygons for graphs embedded in surfaces are called {\it zigzags} \cite{DDS-book,Lins1} or {\it closed left-right paths} \cite{GR-book,Shank}.
Zigzags have many applications, for example, they are successfully exploited to enumerate all combinatorial possibilities for fullerenes \cite{BD}.
The case when a map, i.e an embedding of a graph in a surface, has a single zigzag is very important \cite{DDS-book,GR-book}.
Following \cite{DDS-book} we call such maps $z$-{\it knotted}.
They have nice homological properties and are closely connected to Gauss code problem 
\cite{CrRos, GR-book,Lins2}.

The studying of zigzags in $3$-regular plane graphs, in particular fullerenes, is one of the main directions of \cite{DDS-book}.
A large class of $z$-knotted $3$-regular plane graphs is obtained by using computer.
The dual objects, i.e. spherical triangulations, have the same zigzag structure. 
Zigzags in triangulations of surfaces (not necessarily orientable) are investigated in \cite{PT1,PT2,PT3}.
By \cite{PT2}, every such triangulation admits a $z$-knotted shredding.
In this paper, we describe a class of $z$-knotted triangulations which cannot be obtained by shredding.

A $z$-{\it orientation} of a map is a minimal collection of zigzags which double covers the set of edges \cite{DDS-book}.
In the $z$-knotted case, this collection contains only one zigzag and is unique up to reversing.
For every $z$-orientation we have the following two types of edges:
an edge is of type I if the distinguished zigzags pass through this edge in different directions
and an edge is of type II if they pass through the edge in the same direction. 
It is not difficult to prove that for every face in a triangulation with fixed $z$-orientation one of the following possibilities is realized:
the face contains precisely two edges of type I and the third edge is of type II or  all edges are of type II.
In the case when all faces are of the first type,
the number of edges of type I is the twofold number of edges of type II
and we say that a zigzag is {\it homogeneous} if it contains precisely two edges of type I after each edge of type II.
We describe a one-to-one correspondence between triangulations with homogeneous zigzags
and embeddings of directed Eulerian graphs in surfaces such that all edges of every face form a directed cycle (Theorem 1).
Note that directed Eulerian spherical embeddings are known also as  {\it plane alternating dimaps};
they are investigated, for example, in \cite{BHS, Farr, McCourt}.
Directed Eulerian embeddings in arbitrary surfaces are considered in \cite{BCMMcK, CGH}.

In the second part of the paper, we construct a class of $z$-knotted spherical triangulations whose zigzags are homogeneous. 
These triangulations are tree structured and we show how they can be obtained from rooted trees.

\section{Zigzags and $z$-orientations of triangulations of surfaces}
Let $M$ be a connected closed $2$-dimensional surface (not necessarily orientable).
A {\it triangulation} of $M$ is a $2$-cell embedding of a connected simple finite graph in $M$ such that all faces are triangles \cite[Section 3.1]{MT-book}.
Then the following assertions are fulfilled:
(1) every edge is contained in precisely  two distinct faces,
(2) the intersection of two distinct faces is an edge or a vertex or empty.

Let $\Gamma$ be a triangulation of $M$.
A {\it zigzag}  in $\Gamma$ is a sequence of edges $\{e_{i}\}_{i\in {\mathbb N}}$ satisfying the following conditions for every natural $i$: 
\begin{enumerate}
\item[$\bullet$] $e_{i}$ and $e_{i+1}$ are distinct edges of a certain face 
(then they have a common vertex, since every face is a triangle),
\item[$\bullet$] the faces containing $e_{i},e_{i+1}$ and $e_{i+1},e_{i+2}$ are distinct
and the edges $e_{i}$ and $e_{i+2}$ are non-intersecting.
\end{enumerate} 
Since $\Gamma$ is finite, 
there is a natural number $n>0$ such that $e_{i+n}=e_{i}$ for every natural $i$. 
In what follows, every zigzag will be presented as a cyclic sequence $e_{1},\dots,e_{n}$,
where $n$ is the smallest number satisfying the above condition.

Every zigzag is completely determined by any pair of consecutive edges  belonging to this zigzag
and for any distinct edges $e$ and $e'$ on a face there is a unique zigzag containing the sequence $e,e'$.
If $Z=\{e_{1},\dots,e_{n}\}$ is a zigzag, then the reversed sequence $Z^{-1}=\{e_{n},\dots,e_{1}\}$ also is a zigzag.
A zigzag cannot contain a sequence $e,e',\dots,e',e$ which implies that
$Z\ne Z^{-1}$ for any zigzag $Z$, i.e. a zigzag cannot be self-reversed (see, for example, \cite{PT2}).
We say that $\Gamma$ is $z$-{\it knotted} if it contains precisely two zigzags $Z$ and $Z^{-1}$, in other words,
there is a single zigzag up to reversing.

\begin{exmp}{\rm
See Fig.1 for zigzags in the three Platonic solids which are triangulations of the sphere ${\mathbb S}^2$
(the zigzags are drawn by the bold line). 
}\end{exmp}
\begin{center}
\begin{tikzpicture}[scale=0.5]
\draw[xshift=4.375cm, fill=black] (0:1.75cm) circle (3pt);
\draw[xshift=4.375cm, fill=black] (90:1.75cm) circle (3pt);
\draw[xshift=4.375cm, fill=black] (180:1.75cm) circle (3pt);
\draw[xshift=4.375cm, fill=black] (270:1.75cm) circle (3pt);
    \draw[xshift=4.375cm,thick,line width=2pt] (0:1.75cm) \foreach \x in {90,180,...,359} {
            -- (\x:1.75cm) 
        } -- cycle (90:1.75cm);
    \draw[xshift=4.375cm, dashed] (0:1.75cm) \foreach \x in {90,270} {-- (\x:1.75cm)};
    \draw[xshift=4.375cm] (0:1.75cm) \foreach \x in {0,180} {-- (\x:1.75cm)};

\draw[xshift=8.75cm, fill=black] (0:1.75cm) circle (3pt);
\draw[xshift=8.75cm, fill=black] (60:1.75cm) circle (3pt);
\draw[xshift=8.75cm, fill=black] (120:1.75cm) circle (3pt);
\draw[xshift=8.75cm, fill=black] (180:1.75cm) circle (3pt);
\draw[xshift=8.75cm, fill=black] (240:1.75cm) circle (3pt);
\draw[xshift=8.75cm, fill=black] (300:1.75cm) circle (3pt);
    \draw[xshift=8.75cm,thick,line width=2pt] (0:1.75cm) \foreach \x in {60, 120,...,359} {
            -- (\x:1.75cm) 
        } -- cycle (60:1.75cm);
    \draw[xshift=8.75cm, dashed] (0:1.75cm)--(120:1.75cm)--(240:1.75cm)--cycle;
    \draw[xshift=8.75cm] (60:1.75cm)--(180:1.75cm)--(300:1.75cm)--cycle;

\draw[xshift=13.125cm, fill=black] (0:1.75cm) circle (3pt);
\draw[xshift=13.125cm, fill=black] (36:1.75cm) circle (3pt);
\draw[xshift=13.125cm, fill=black] (72:1.75cm) circle (3pt);
\draw[xshift=13.125cm, fill=black] (108:1.75cm) circle (3pt);
\draw[xshift=13.125cm, fill=black] (144:1.75cm) circle (3pt);
\draw[xshift=13.125cm, fill=black] (180:1.75cm) circle (3pt);
\draw[xshift=13.125cm, fill=black] (216:1.75cm) circle (3pt);
\draw[xshift=13.125cm, fill=black] (252:1.75cm) circle (3pt);
\draw[xshift=13.125cm, fill=black] (288:1.75cm) circle (3pt);
\draw[xshift=13.125cm, fill=black] (324:1.75cm) circle (3pt);
    \draw[xshift=13.125cm,thick,line width=2pt] (0:1.75cm) \foreach \x in {36, 72,...,359} {
            -- (\x:1.75cm) 
        } -- cycle (36:1.75cm);
\draw[xshift=13.125cm, fill=black] (0,0) circle (3pt);
    \draw[xshift=13.125cm] (0:1.75cm)--(0,0) (72:1.75cm)--(0,0) (144:1.75cm)--(0,0) (216:1.75cm)--(0,0) (288:1.75cm)--(0,0);
    \draw[xshift=13.125cm] (0:1.75cm)--(72:1.75cm)--(144:1.75cm)--(216:1.75cm)--(288:1.75cm)--cycle;
    \draw[xshift=13.125cm, dashed] (36:1.75cm)--(0,0) (108:1.75cm)--(0,0) (180:1.75cm)--(0,0) (252:1.75cm)--(0,0) (324:1.75cm)--(0,0);
    \draw[xshift=13.125cm, dashed] (36:1.75cm)--(108:1.75cm)--(180:1.75cm)--(252:1.75cm)--(324:1.75cm)--cycle;
\end{tikzpicture}
\captionof{figure}{ }
\end{center}
\begin{exmp}{\rm
Let $BP_n$ be  the $n$-gonal bipyramid, where $1,\dots,n$ denote the consecutive vertices of the base 
and the remaining two vertices are denoted by $a,b$ (see Fig.2 for $n=3$).  
\begin{center}
\begin{tikzpicture}[scale=0.3]


\coordinate (A) at (0.5,4);
\coordinate (B) at (0.5,-5);
\coordinate (1) at (0,-1);
\coordinate (2) at (-2,0);
\coordinate (3) at (3,0);

\draw[fill=black] (A) circle (3.5pt);
\draw[fill=black] (B) circle (3.5pt);

\draw[fill=black] (1) circle (3.5pt);
\draw[fill=black] (2) circle (3.5pt);
\draw[fill=black] (3) circle (3.5pt);

\draw[thick] (3)--(1)--(2);
\draw[thick, dashed] (2)--(3);

\draw[thick] (A)--(1);
\draw[thick] (A)--(2);
\draw[thick] (A)--(3);

\draw[thick] (B)--(1);
\draw[thick] (B)--(2);
\draw[thick] (B)--(3);

\node at (3.6,0) {$3$};
\node at (-0.45,-1.525) {$2$};
\node at (-2.5,0) {$1$};

\node at (0.5,4.6) {$a$};
\node at (0.5,-5.8) {$b$};

\node[color=white] at (4,0) {$.$};

\end{tikzpicture}
\captionof{figure}{ }
\end{center}
(a). In the case when $n=2k+1$, the bipyramid $BP_n$ is $z$-knotted.
If $k$ is odd, then the unique (up to reversing) zigzag is 
$$a1,12,2b,b3,\dots,a(n-2),(n-2)(n-1),(n-1)b,bn,n1,$$
$$1a,a2,23,3b,\dots, a(n-1),(n-1)n,nb,$$
$$b1,12,2a,a3,\dots,b(n-2),(n-2)(n-1),(n-1)a,an,n1,$$
$$1b,b2,23,3a,\dots,b(n-1),(n-1)n, na.$$
If $k$ is even, then this zigzag is
$$a1,12,2b,b3,\dots,b(n-2),(n-2)(n-1), (n-1)a,an,n1,$$
$$1b,b2,23,3a,\dots,a(n-1),(n-1)n,nb,$$
$$b1,12,2a,a3,\dots,a(n-2),(n-2)(n-1),(n-1)b,bn,n1,$$
$$1a,a2,23,3b,\dots,b(n-1),(n-1)n, na.$$
(b). If $n=2k$ and $k$ is odd, 
then the bipyramid contains precisely two zigzags (up to reversing):
$$a1,12,2b,b3,34,\dots,a(n-1),(n-1)n, nb,$$
$$b1,12,2a,a3,34,\dots,b(n-1),(n-1)n, na$$
and 
$$a2,23,3b,b4,45,\dots,an,n1,1b,$$
$$b2,23,3a,a4,45,\dots,bn,n1,1a.$$
(c). In the case when $n=2k$ and $k$ is even, 
there are precisely four zigzags (up to reversing):
$$a1,12, 2b,\dots,b(n-1),(n-1)n, na;$$ 
$$b1,12, 2a,\dots,a(n-1),(n-1)n, nb;$$
$$a2,23,3b,\dots,bn,n1,1a;$$
$$b2,23,3a,\dots,an,n1,1b.$$
}\end{exmp}
See \cite{PT1,PT2} for more examples of $z$-knotted triangulations. 
Examples of $z$-knotted fullerenes can be found in \cite{DDS-book}.

Suppose that $\Gamma$ contains precisely distinct $k$ zigzags up to reversing. 
A $z$-{\it orien\-tation} of $\Gamma$ is a collection $\tau$ consisting of $k$ distinct  zigzags  such that 
for each zigzag $Z$ we have $Z\in \tau$ or $Z^{-1}\in \tau$. 
There are precisely $2^k$ distinct $z$-orientations of $\Gamma$.
For every $z$-orientation $\tau=\{Z_{1},\dots,Z_{k}\}$ the $z$-orientation $\tau^{-1}=\{Z^{-1}_{1},\dots, Z^{-1}_{k}\}$ will be called {\it reversed} to $\tau$.

Let $\tau$ be a $z$-orientation of $\Gamma$. 
For every edge $e$ of $\Gamma$ one of the following possibilities is realized:
\begin{enumerate}
\item[$\bullet$] there is a zigzag $Z\in \tau$ such that $e$ occurs in this zigzag twice and other zigzags from $\tau$
do not contain $e$,
\item[$\bullet$] there are two distinct zigzags $Z,Z'\in\tau$ such that $e$ occurs in each of these zigzags only ones 
and other zigzags from $\tau$ do not contain $e$. 
\end{enumerate}
In the first case, we say that $e$ is an {\it edge of type} I if $Z$ passes through $e$ twice in different directions; 
otherwise, $e$ is said to be an {\it edge of type} II.
Similarly, in the second case:
$e$ is an {\it edge of type} I if $Z$ and $Z'$ pass through $e$ in different directions
or $e$ is an {\it edge of type} II if $Z$ and $Z'$ pass through $e$ in the same direction.
In what follows, edges of type II will be considered together with the direction defined by $\tau$.
A vertex of $\Gamma$ is called {\it of type} I if it belongs only to edges of type I;
otherwise, we say that this is a {\it vertex of type} II.

The following statements hold for any $z$-orientation $\tau$ of $\Gamma$.

\begin{lemma} \label{lemma1}
For each vertex of type {\rm II} the number of edges of type {\rm II} which enter this vertex
is equal to the number of edges of type {\rm II} which leave it.
\end{lemma}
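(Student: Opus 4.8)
The plan is to examine a vertex $v$ of type II locally, by looking at the cyclic sequence of triangles and edges around $v$, and to track how a zigzag behaves each time it enters the ``star'' of $v$.

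First I would set up the local picture. Since $\Gamma$ is a triangulation, the edges incident to $v$ are cyclically ordered, say $f_0,f_1,\dots,f_{d-1},f_d=f_0$, where consecutive edges $f_j,f_{j+1}$ bound a common triangle $T_j$ at $v$. Now consider what a zigzag $Z\in\tau$ does when it arrives at $v$. A zigzag enters the star of $v$ along some edge, say it contains a consecutive pair $e,f_j$ with $e$ an edge of $T_{j-1}$ not incident to $v$; then by the definition of zigzag the next edge is the edge of $T_{j-1}$ incident to $v$, i.e. $f_{j-1}$, and then the following edge is again a non-$v$ edge, this time of $T_{j-2}$. So each ``visit'' of a zigzag to the star of $v$ uses exactly two consecutive $v$-edges $f_j,f_{j-1}$ (in that order), crossing the triangle $T_{j-1}$ between them, and these two $v$-edges are traversed in the direction \emph{toward} $v$ and then \emph{away} from $v$ respectively — more precisely, I would orient each $v$-edge and record, for each visit, which of the two $v$-edges is traversed toward $v$ and which away from $v$.

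The key combinatorial step is then a counting/parity argument on the $d$ edges around $v$. Every edge $f_j$ incident to $v$ is used by the zigzags in $\tau$ exactly twice total (counting multiplicity), once in each of its two incident triangles $T_{j-1}$ and $T_j$; in the first visit using $T_{j-1}$ it plays the ``second'' role (traversed away from $v$) and in the visit using $T_j$ it plays the ``first'' role (traversed toward $v$) — or vice versa, depending on orientations. By the definition of type: $f_j$ is of type II precisely when these two traversals agree in direction, i.e. both toward $v$ or both away from $v$; it is of type I when they disagree, i.e. exactly one of the two traversals is toward $v$. I would then count $\#\{\text{traversals of some } f_j \text{ toward } v\}$ in two ways. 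On one hand, this number equals (number of visits) $=$ (number of triangles at $v$) $= d$, since each visit contributes exactly one ``toward $v$'' traversal (the first edge of the pair) and exactly one ``away from $v$'' traversal. On the other hand, summing over the $d$ edges: a type-I edge contributes exactly $1$ to this count, a type-II edge directed into $v$ contributes $2$, and a type-II edge directed out of $v$ contributes $0$. Writing $a$ = number of type-I edges at $v$, $b_{\mathrm{in}}$, $b_{\mathrm{out}}$ for the numbers of type-II edges pointing toward and away from $v$, we get $a + 2b_{\mathrm{in}} = d = a + b_{\mathrm{in}} + b_{\mathrm{out}}$, hence $b_{\mathrm{in}} = b_{\mathrm{out}}$, which is exactly the claim.

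The main obstacle I anticipate is bookkeeping subtleties rather than a conceptual difficulty: I must handle the case where a single zigzag $Z\in\tau$ visits the star of $v$ several times (possibly reusing a triangle $T_{j-1}$? no — the zigzag definition forbids immediately repeating a face, and each triangle at $v$ is ``crossed'' by exactly one zigzag-visit since a triangle has three edges and the three zigzag-pairs through it are determined, so each $T_j$ hosts exactly one visit), and the case of an edge of type II that occurs \emph{twice in the same zigzag} — there I need to check that the ``both toward'' / ``both away'' dichotomy still makes sense, which it does because the two occurrences still use the two distinct incident triangles. Once these local facts about visits and triangles are nailed down, the two-way count above closes the argument cleanly; I would present it by first proving the ``one visit per triangle'' observation as a sub-claim, then doing the orientation count.
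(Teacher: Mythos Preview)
Your argument is correct and is an elaborated version of the paper's one-line proof: the paper simply observes that the total number of times the zigzags in $\tau$ enter a vertex equals the total number of times they leave it, and since each type-I edge contributes one to each side the type-II in/out counts must balance. Your star-of-$v$ analysis and the equation $a+2b_{\mathrm{in}}=d=a+b_{\mathrm{in}}+b_{\mathrm{out}}$ spell out exactly this balancing explicitly.
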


\begin{proof}
The number of times that the zigzags from $\tau$ enter a vertex  is equal to the number of times that these zigzags leave this vertex.
\end{proof}

\begin{prop}\label{prop-or}
For every face of $\Gamma$ one of the following possibilities is realized:
\begin{enumerate}
\item[{\rm (I)}] the face contains two edges of type {\rm I} and the third edge is of type {\rm II}, see {\rm Fig.3(a)};
\item[{\rm (II)}] all edges of the face are of type {\rm II} and form a directed cycle, see {\rm Fig.3(b)}.
\end{enumerate}
\end{prop}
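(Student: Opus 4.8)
The plan is to reduce everything to a purely local analysis at a single face $F$, with vertices $v_1,v_2,v_3$ and edges $e_{12}=v_1v_2$, $e_{23}=v_2v_3$, $e_{31}=v_3v_1$. First I would fix an orientation of the disk $F$, say with $v_1,v_2,v_3$ in this cyclic order, and call the induced boundary directions $v_1\to v_2$, $v_2\to v_3$, $v_3\to v_1$ the \emph{reference directions} of the three edges (only $F$ itself is oriented, so nothing is assumed about orientability of $M$). By a \emph{transit} through $F$ I mean an occurrence of two consecutive edges of some zigzag from $\tau$ that both lie on $F$; such a pair of edges shares a vertex of $F$, and I say the transit \emph{cuts the corner} at that vertex. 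I also make precise the direction in which a zigzag passes through an edge $e$: it is the direction pointing from the corner cut by the zigzag on the face it leaves through $e$ to the corner it cuts on the face it enters through $e$; by the non-intersection axiom these two corners are the two distinct endpoints of $e$, so this is well defined, and $e$ is of type I (resp. II) exactly when its two traversals by $\tau$ induce opposite (resp. equal) directions.

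The first key step is a counting argument. Since $\tau$ double covers the edge set, each edge is traversed exactly twice by the zigzags of $\tau$; and, since for any position $i$ of a zigzag the two faces carrying $\{e_{i-1},e_i\}$ and $\{e_i,e_{i+1}\}$ are precisely the two faces meeting along $e_i$, every traversal of an edge of $F$ contributes exactly one transit through $F$ involving that edge. Counting incidences between the three edges of $F$ and the transits through $F$, and using that each transit involves two distinct edges of $F$, one gets that there are exactly three transits; writing $n_j$ for the number of them cutting the corner at $v_j$, the three relations $n_i+n_j=2$ (one per edge $e_{ij}$) force $n_1=n_2=n_3=1$. So there is exactly one transit $T_j$ at each corner $v_j$, it uses the two edges at $v_j$, and the two traversals of each edge $e_{ij}$ come from $T_i$ and $T_j$.

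Next I would track directions. Each transit $T_j$ runs in one of the two senses around the corner $v_j$; set $s_j=+$ if this sense agrees with the chosen cyclic order of $F$ and $s_j=-$ otherwise. The core computation is to show that $T_j$ induces on \emph{each} of its two edges the reference direction when $s_j=+$ and the opposite direction when $s_j=-$. This is a short check once the non-intersection axiom is used to identify the ``far'' corner cut on the adjacent face along each edge of $F$ (for $T_1$, say, the far corner along $e_{12}$ is $v_2$ and along $e_{31}$ is $v_3$). Consequently $e_{ij}$ is of type II iff $s_i=s_j$ and of type I iff $s_i\neq s_j$.

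Finally, since $s_1,s_2,s_3\in\{+,-\}$, either all three are equal or exactly one of the equalities $s_1=s_2$, $s_2=s_3$, $s_3=s_1$ holds. In the first case all three edges are of type II and all carry the reference direction (if $s_j\equiv+$) or all carry the opposite direction (if $s_j\equiv-$), hence form a directed cycle --- this is possibility (II). In the second case exactly one edge is of type II and the other two are of type I --- possibility (I). I expect the main obstacle to be the bookkeeping in the third paragraph: fixing the ``induced direction'' conventions carefully enough that the clean statement ``$s_j=+$ forces the reference direction on both edges of $T_j$'' is genuinely correct, and verifying it using only the intrinsic zigzag axioms so that non-orientability of $M$ never intervenes. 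Once that is in place, the remainder is a finite case analysis.
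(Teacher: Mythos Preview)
Your proposal is correct and follows essentially the same approach as the paper, only spelled out in far greater detail. The paper's proof simply names the three zigzag segments through the face (your three transits), normalizes one of them to lie in $\tau$, and then declares the four remaining cases an ``easy verification''; your signs $s_1,s_2,s_3\in\{+,-\}$ are exactly the data the paper is case-splitting on, with one sign fixed by the normalization. Your explicit counting argument that there is precisely one transit per corner, and your careful identification of the induced edge direction via the non-intersection axiom, are things the paper leaves implicit.
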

A face in a triangulation is said to be {\it of type} I or {\it of type} II if the corresponding possibility is realized.
\begin{center}
\begin{tikzpicture}[scale=0.6]

\draw[fill=black] (0,2) circle (3pt);
\draw[fill=black] (-1.7320508076,-1) circle (3pt);
\draw[fill=black] (1.7320508076,-1) circle (3pt);

\draw [thick, decoration={markings,
mark=at position 0.62 with {\arrow[scale=1.5,>=stealth]{>>}}},
postaction={decorate}] (0,2) -- (-1.7320508076,-1);

\draw [thick, decoration={markings,
mark=at position 0.62 with {\arrow[scale=1.5,>=stealth]{>>}}},
postaction={decorate}] (-1.7320508076,-1) -- (1.7320508076,-1);

\draw [thick, decoration={markings,
mark=at position 0.62 with {\arrow[scale=1.5,>=stealth]{<<}}},
postaction={decorate}] (0,2) -- (1.7320508076,-1);

\node at (0,-1.65) {(a)};

\draw[fill=black] (5.1961524228,2) circle (3pt);
\draw[fill=black] (3.4641016152,-1) circle (3pt);
\draw[fill=black] (6.9282032304,-1) circle (3pt);

\draw [thick, decoration={markings,
mark=at position 0.62 with {\arrow[scale=1.5,>=stealth]{><}}},
postaction={decorate}] (5.1961524228,2) -- (3.4641016152,-1);

\draw [thick, decoration={markings,
mark=at position 0.62 with {\arrow[scale=1.5,>=stealth]{>>}}},
postaction={decorate}] (3.4641016152,-1) -- (6.9282032304,-1);

\draw [thick, decoration={markings,
mark=at position 0.62 with {\arrow[scale=1.5,>=stealth]{><}}},
postaction={decorate}] (5.1961524228,2) -- (6.9282032304,-1);

\node at (5.1961524228,-1.65) {(b)};
\end{tikzpicture}
\captionof{figure}{ }
\end{center}

\begin{proof}[Proof of Proposition \ref{prop-or}]
Consider a face whose edges are denoted by $e_{1},e_{2},e_{3}$.
Without loss of generality we can assume that the zigzag containing the sequence $e_{1},e_{2}$ belongs to $\tau$.
Let $Z$ and $Z'$ be the zigzags containing the sequences $e_{2},e_{3}$ and $e_{3},e_{1}$, respectively. 
Then $Z\in \tau$ or $Z^{-1}\in \tau$ and $Z'\in \tau$ or $Z'^{-1}\in \tau$.
An easy verification shows that for each of these four cases we obtain (I) or (II).
\end{proof}

\begin{exmp}{\rm
If $n$ is odd, then the bipyramid $BP_n$ has the unique $z$-orientation (up to reversing), see Example 2(a).
The edges $ai$ and $bi$, $i\in \{1,\dots,n\}$ are of type I and the edges on the base of the bipyramid  are of type II.
The vertices $a,b$ are of type I and the vertices on the base are of type II. All faces are of type I.
The same happens for the case when $n=2k$ and $k$ is odd if the $z$-orientation is defined by 
the two zigzags presented in  Example 2(b); 
however, all faces are of type II  if we replace one of these zigzags by the reversed.
}\end{exmp}

\begin{exmp}{\rm
Suppose that $n=2k$ and $k$ is even.
Let $Z_{1},Z_{2},Z_{3},Z_{4}$ be the zigzags from Example 2(c).
For the $z$-orientation defined by these zigzags  all faces are of type I.
If the $z$-orientation is defined by $Z_{1},Z_{2}$ and $Z^{-1}_{3}, Z^{-1}_{4}$,
then all faces are of type II. 
In the case when the $z$-orientation is defined by $Z_{1},Z_{2},Z_{3}$ and $Z^{-1}_{4}$,
there exist faces of the both types.
}\end{exmp}

\begin{rem}{\rm
If we replace a $z$-orientation by the reversed $z$-orientation,
then the type of every edge does not change (but all edges of type II reverse the directions), consequently, 
the types of vertices and faces also do not change.
For $z$-knotted triangulations we can say about the types of edges, vertices and faces without attaching to a $z$-orientation \cite{PT1}.
}\end{rem}

\section{Homogeneous zigzags in triangulations with faces of type I}
In this section, we will always suppose that $\Gamma$ is a triangulation with fixed $z$-orientation $\tau$ such that all faces in $\Gamma$ are of type I,
i.e. each face contains precisely two edges of type I and the third edge is of type II. 
If $m$ is the number of faces, then there are precisely $m$ edges of type I and $m/2$ edges of type II.
In other words, the number of edges of type I is the twofold number of edges of type II.
We say that a zigzag of $\Gamma$ is {\it homogeneous} if it is a cyclic sequence $\{e_{i},e'_{i},e''_{i}\}^{n}_{i=1}$,
where each $e_{i}$ is an edge of type II and all $e'_{i},e''_{i}$ are edges of type I.
If a zigzag is homogeneous, then the reversed zigzag also is homogeneous.  

\begin{exmp}{\rm
The zigzags of $BP_n$ are homogeneous if $n$ is odd (the $z$-knotted case) or
$n$ is even and the $z$-orientation is defined by the two zigzags from Example 2(b) or by the four zigzags from Example 2(c).
}\end{exmp}

Let $\Gamma'$ be a closed $2$-cell embedding of a connected finite simple graph in the surface $M$.
Then all faces of $\Gamma'$ are homeomorphic to a closed $2$-dimensional disc. 
For each face $F$ we take a point $v_{F}$ belonging to the interior of $F$.
We add all $v_{F}$ to the vertex set of $\Gamma'$ and connect each $v_{F}$ with every vertex of $F$ by an edge.
We obtain a triangulation of $M$ which will be denoted by ${\rm T}(\Gamma')$.

The assumption that our $2$-cell embedding is closed cannot be omitted.
Indeed, if a certain face of $\Gamma'$ is not homeomorphic to a closed $2$-dimensional disc, 
then ${\rm T}(\Gamma')$ is not a triangulation.

\begin{exmp}{\rm
If $\Gamma'$ is a triangulation of $M$,
then every zigzag $e_{1},e_{2},e_{3},\dots$ in $\Gamma'$ can be extended to a zigzag
$$e_{1},e'_{1},e''_{1},e_{2},e'_{2},e''_{2},e_{3},\dots$$
in ${\rm T}(\Gamma')$  which passes through edges of $\Gamma'$ in the opposite directions, see Fig.4.
Therefore, every $z$-orientation $\tau'$ of $\Gamma'$ induces a $z$-orientation $t(\tau')$ of ${\rm T}(\Gamma')$.
It is clear that $t(\tau'^{-1})=t(\tau')^{-1}$ and ${\rm T}(\Gamma')$ is $z$-knotted if and only if $\Gamma'$ is $z$-knotted.
}\end{exmp}
\begin{center}
\begin{tikzpicture}[scale=1.8]

\draw[fill=black] (0,0) circle (1.5pt);

\draw[fill=black] (0:2cm) circle (1.5pt);
\draw[fill=black] (-60:2cm) circle (1.5pt);
\draw[fill=black] (-120:2cm) circle (1.5pt);
\draw[fill=black] (-180:2cm) circle (1.5pt);

\draw[thick, line width=2pt] (-180:2cm) -- (0:2cm) -- (-60:2cm) -- (-120:2cm) -- (-180:2cm);
\draw[thick, line width=2pt] (-60:2cm) -- (0,0) -- (-120:2cm);

\draw [thick, decoration={markings,
mark=at position 0.59 with {\arrow[scale=2]{>}},
mark=at position 0.51 with {\arrow[scale=2,>=stealth]{<}}},
postaction={decorate}] (0:2cm) -- (-60:2cm);

\draw [thick, decoration={markings,
mark=at position 0.51 with {\arrow[scale=2,>=stealth]{<}},
mark=at position 0.59 with {\arrow[scale=2]{>}}},
postaction={decorate}] (-60:2cm) -- (0,0);

\draw [thick, decoration={markings,
mark=at position 0.51 with {\arrow[scale=2,>=stealth]{<}},
mark=at position 0.59 with {\arrow[scale=2]{>}}},
postaction={decorate}] (0,0) -- (-120:2cm);

\draw [thick] (-120:2cm) -- (-180:2cm);

\draw [thick, line width=1pt, dashed, decoration={markings,
mark=at position 0.5 with {\arrow[scale=2]{<}}},
postaction={decorate}] (-30:1.1547cm) -- (0,0);

\draw [thick, line width=1pt, dashed, decoration={markings,
mark=at position 0.55 with {\arrow[scale=2]{>}}},
postaction={decorate}] (-30:1.1547cm) -- (0:2cm);

\draw[thick, line width=1pt, dashed] (-30:1.1547cm) -- (-60:2cm);

\draw[thick, line width=1pt, dashed] (-90:1.1547cm) -- (0,0);

\draw [thick, line width=1pt, dashed, decoration={markings,
mark=at position 0.5 with {\arrow[scale=2]{<}}},
postaction={decorate}] (-90:1.1547cm) -- (-120:2cm);

\draw [thick, line width=1pt, dashed, decoration={markings,
mark=at position 0.55 with {\arrow[scale=2]{>}}},
postaction={decorate}] (-90:1.1547cm) -- (-60:2cm);

\draw [thick, line width=1pt, dashed, decoration={markings,
mark=at position 0.55 with {\arrow[scale=2]{>}}},
postaction={decorate}] (-150:1.1547cm) -- (0,0);

\draw[thick, line width=1pt, dashed] (-150:1.1547cm) -- (-120:2cm);

\draw [thick, line width=1pt, dashed, decoration={markings,
mark=at position 0.5 with {\arrow[scale=2]{<}}},
postaction={decorate}] (-150:1.1547cm) -- (-180:2cm);

\draw[fill=white] (-30:1.1547cm) circle (1.5pt);
\draw[fill=white] (-90:1.1547cm) circle (1.5pt);
\draw[fill=white] (-150:1.1547cm) circle (1.5pt);

\node at (-130:1cm) {$e_1$};
\node at (-50:1cm) {$e_2$};

\node at (-70:1.3cm) {$e''_1$};
\node at (-110:1.3cm) {$e'_1$};

\node at (-20:0.75cm) {$e'_2$};
\node at (-11.8:1.28cm) {$e''_2$};

\node at (-29:1.93cm) {$e_3$};

\end{tikzpicture}
\captionof{figure}{ }
\end{center}

Denote by $\Gamma_{II}$ the subgraph of $\Gamma$ formed by all vertices and edges of type II.

\begin{theorem}
If all zigzags of $\Gamma$ are homogeneous, 
then $\Gamma_{II}$ is a closed $2$-cell embedding of a simple Eulerian digraph 
such that every face is a directed cycle and $\Gamma={\rm T}(\Gamma_{II})$.
Conversely, if $\Gamma'$ is a closed $2$-cell embedding of a simple Eulerian digraph and every face is a directed cycle,
then the triangulation ${\rm T}(\Gamma')$ admits a unique $z$-orien\-tation 
such that all zigzags of ${\rm T}(\Gamma')$ are homogeneous
and $\Gamma'$ is a subgraph of ${\rm T}(\Gamma')$ formed by all vertices and edges of type II. 
\end{theorem}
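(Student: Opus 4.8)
The plan is to prove the two implications separately; almost everything lies in the first one, and in it the decisive step --- which I expect to be the main obstacle --- is the claim that, when all zigzags are homogeneous, the apex of every face (the common vertex of its two edges of type I) is a vertex of type I. I would prove this by contradiction: let $c$ be the apex of a face $F$, and suppose $c$ also lies on an edge of type II (necessarily not one of the two edges of $F$ at $c$). The two edges of $F$ at $c$ form a turn of a uniquely determined zigzag, which --- being homogeneous --- contains them as two consecutive edges of type I within a pattern $\dots,\mathrm{II},\mathrm{I},\mathrm{I},\mathrm{II},\dots$; the edge of type II immediately preceding this block belongs to the face $F_{1}$ sharing with $F$ one of these two edges. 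If the apex of $F_{1}$ were not $c$, then the type II edge of $F_{1}$ would be its third edge at $c$, and the zigzag would contain three edges at $c$ consecutively; but the first and third of these meet at $c$, contradicting the rule that edges two apart in a zigzag are disjoint. Hence $F_{1}$, and symmetrically the face on the other side of $F$ at $c$, has apex $c$; iterating around $c$ shows that every face at $c$ has apex $c$, so every edge at $c$ is of type I and $c$ is of type I --- a contradiction.

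Granting this, I would fix a vertex $v$ of type I. Every face at $v$ then has $v$ as apex and its third edge of type II, so the closed star of $v$ is a disc whose boundary is the link of $v$, a cycle consisting of edges of type II (and every neighbour of $v$ is of type II). Distinct vertices of type I have closed stars with disjoint interiors (a common face would have two apices), and by the claim these closed stars cover $M$, each edge of type II lying on exactly two of them --- the closed stars of the two (distinct) apices of its two faces. Thus $M$ is obtained by gluing these discs along $\Gamma_{II}$, so $\Gamma_{II}$ is a closed $2$-cell embedding whose faces are the closed stars of the vertices of type I, each bounded by a cycle; it is connected because $M$ is, simple as a subgraph of $\Gamma$, and balanced at every vertex by Lemma \ref{lemma1}, hence a simple Eulerian digraph with the orientation induced by $\tau$. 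The equality $\Gamma={\rm T}(\Gamma_{II})$ then reads off directly: the vertices of type I are the vertices $v_{F}$, joined by the edges of type I to the vertices of the corresponding face (their neighbours), and the faces of $\Gamma$ are exactly the triangles spanned by a vertex of type I together with an edge of its link.

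It remains to check that each face of $\Gamma_{II}$ --- the link $u_{0}u_{1}\cdots u_{d-1}$ of a vertex $v$ of type I --- is a directed cycle. Writing $F_{j}$ for the face at $v$ with boundary edge $e_{j}=u_{j}u_{j+1}$, one checks from the zigzag axioms that the zigzag through the turn of $F_{j}$ at $v$ runs through the ``spin'' $e_{j-1},\,vu_{j},\,vu_{j+1},\,e_{j+1}$ over the three consecutive faces $F_{j-1},F_{j},F_{j+1}$, and hence traverses $e_{j-1}$ and $e_{j+1}$ consistently with respect to the cyclic orientation $u_{0}\to u_{1}\to\cdots$ of the link. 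Now $vu_{j+1}$ is an edge of type I, so it occurs exactly twice among the zigzags of $\tau$ and in opposite directions; using the homogeneity pattern to identify the turns at $vu_{j+1}$, one sees that these two occurrences are precisely the spins associated with $F_{j}$ and with $F_{j+1}$, and the requirement that they traverse $vu_{j+1}$ oppositely forces those two spins to agree in direction. Propagating this around $v$, all the $e_{j}$ are traversed in one and the same sense, i.e. the link of $v$ is a directed cycle. This finishes the first implication.

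For the converse, let $\Gamma'$ be a closed $2$-cell embedding of a simple Eulerian digraph all of whose faces are directed cycles, and put $\Gamma={\rm T}(\Gamma')$. The zigzag axioms show that a zigzag entering a fan at a vertex $v_{F}$ through an edge of $\Gamma'$ makes a spin through three triangles and exits through another edge of $\Gamma'$, and --- since the boundary of $F$ is a directed cycle --- that it traverses the exit edge in the same sense (with respect to the digraph $\Gamma'$) in which it traversed the entrance edge; so each zigzag of $\Gamma$ traverses all edges of $\Gamma'$ in their given directions or all in the opposite directions, the reverse zigzag doing the opposite. Taking the ``direct'' zigzags gives a $z$-orientation for which all faces are of type I, the edges of $\Gamma'$ being of type II with their given directions and the new edges of type I; hence all zigzags are homogeneous and $\Gamma_{II}=\Gamma'$. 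Finally, any $z$-orientation whose edges of type II, with their directions, form $\Gamma'$ must, by the same propagation, consist of the ``direct'' zigzags, so it is unique.
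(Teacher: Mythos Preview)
Your proof is correct and follows essentially the same strategy as the paper's: both arguments trace zigzags around the vertex opposite a type~II edge (your ``apex'') to show that this vertex is of type~I and that its link $C(v)$ is a directed cycle of type~II edges. The paper carries out these two conclusions in a single sweep around $C(v)$, while you separate them into the apex claim and the directed-cycle claim; you also make the uniqueness of the $z$-orientation in the converse direction explicit, which the paper leaves implicit---but the underlying mechanism is the same.
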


\begin{proof}
(I). 
Let $v$ be a vertex of $\Gamma$. 
Consider all faces containing $v$ and take the edge on each of these faces which does not contain $v$.
All such edges form a cycle which will be denoted by $C(v)$.

Suppose that all zigzags of $\Gamma$ are homogeneous and
consider any edge $e_{1}$ of type II. 
Let $v_{1}$ and $v_{2}$ be the vertices of this edge such that $e_{1}$ is directed from $v_{1}$ to $v_{2}$.
We choose one of the  two faces containing $e_1$ and take in this face the vertex $v$ which does not belong to $e_1$.
Let $e'_{1}$ and $e''_{1}$ be the edges which contain $v$ and occur in a certain zigzag $Z\in \tau$ immediately after $e_{1}$, see Fig.5.
Denote by $e_2$ the third edge of the face containing $e'_{1}$ and $e''_{1}$.
This edge contains $v_{2}$ and another one vertex, say $v_{3}$.
Since $Z$ is homogeneous, the edges $e'_{1}$ and $e''_{1}$ are of type I, and consequently, $e_2$ is of type II.
The zigzag  which goes through $e'_1$ from $v$ to $v_{2}$ belongs to $\tau$
(this follows easily from the fact that $Z$ goes through $e'_{1}$ in the opposite direction and $e'_1$ is an edge of type I).
The latter guarantees that
the edge $e_{2}$ is directed from $v_{2}$ to $v_{3}$.
By our assumption, the edge $e_{3}$ which occurs in $Z$ immediately after  $e'_{1}$ and $e''_{1}$ is of type II.
This edge is directed from $v_{3}$ to a certain vertex $v_{4}$.
So, $e_{1},e_{2},e_{3}$ are consecutive edges of the cycle $C(v)$ and each $e_i$ is directed from $v_i$ to $v_{i+1}$.
Consider the zigzag from $\tau$ which contains the sequence $e_2, e''_1$. 
The next edge in this zigzag connects $v$ and $v_{4}$ (the zigzag goes from $v$ to $v_{4}$).
Let $e_{4}$ be the edge which occurs in the zigzag after it. 
Then $e_{4}$ is an edge of type II (by our assumption), it belongs to $C(v)$ and leaves $v_{4}$.
Recursively, we establish that $C(v)$ is a directed cycle formed by edges of type II and every edge containing $v$ is of type I,
i.e. $v$ is a vertex of type I. 
Now, we consider the other face containing $e_1$ and take the vertex $v'$ of this face which does not belong to $e_{1}$. 
Using the same arguments, we establish that $v'$ is a vertex of type I and $C(v')$ is a directed  cycle formed by edges of type II. 
\begin{center}
\begin{tikzpicture}[scale=1.4, xshift=0.3cm]
\draw[fill=black] (0,0) circle (1.5pt);
\draw[fill=black] (0:2cm) circle (1.5pt);
\draw[fill=black] (-45:2cm) circle (1.5pt);
\draw[fill=black] (-90:2cm) circle (1.5pt);
\draw[fill=black] (-135:2cm) circle (1.5pt);
\draw[fill=black] (-180:2cm) circle (1.5pt);

\draw[thick, line width=2pt] (0:2cm) -- (-45:2cm) -- (-90:2cm) -- (-135:2cm) -- (-180:2cm);

\draw [thick, decoration={markings,
mark=at position 0.49 with {\arrow[scale=2,>=stealth]{>}},
mark=at position 0.61 with {\arrow[scale=2,>=stealth]{>}}},
postaction={decorate}] (-180:2cm) -- (-135:2cm);

\draw [thick, decoration={markings,
mark=at position 0.49 with {\arrow[scale=2,>=stealth]{>}},
mark=at position 0.61 with {\arrow[scale=2,>=stealth]{>}}},
postaction={decorate}] (-135:2cm) -- (-90:2cm);

\draw [thick, decoration={markings,
mark=at position 0.49 with {\arrow[scale=2,>=stealth]{>}},
mark=at position 0.61 with {\arrow[scale=2,>=stealth]{>}}},
postaction={decorate}] (-90:2cm) -- (-45:2cm);

\draw [thick, decoration={markings,
mark=at position 0.49 with {\arrow[scale=2,>=stealth]{>}},
mark=at position 0.61 with {\arrow[scale=2,>=stealth]{>}}},
postaction={decorate}] (-45:2cm) -- (0:2cm);

\draw[thick, line width=1pt, dashed] (-180:2cm) -- (0,0) -- (0:2cm);
\draw[thick, line width=1pt, dashed] (-45:2cm) -- (0,0);
\draw[thick, line width=1pt, dashed] (-90:2cm) -- (0,0);
\draw[thick, line width=1pt, dashed] (-135:2cm) -- (0,0);

\node at (0.15cm,0.15cm) {$v$};

\node at (-45:2.25cm) {$v_4$};
\node at (-90:2.25cm) {$v_3$};
\node at (-135:2.25cm) {$v_2$};
\node at (-180:2.25cm) {$v_1$};

\node at (-157.5:2.08cm) {$e_1$};
\node at (-112.5:2.08cm) {$e_2$};
\node at (-67.5:2.08cm) {$e_3$};
\node at (-22.5:2.08cm) {$e_4$};

\node at (-145:1.1cm) {$e'_1$};
\node at (-100:1.1cm) {$e''_1$};

\node[color=white] at (0:2.25cm) {s};

\end{tikzpicture}
\captionof{figure}{ }
\end{center}
For every vertex $v$ of type I we can take a face containing $v$ and the edge of this face which does not contain $v$.
This edge is of type II (since the remaining two edges of the face are of type I).
The above arguments show that the following assertions are fulfilled:
\begin{enumerate}
\item[(1)] vertices of type {\rm I} exist and for every such vertex $v$ the cycle $C(v)$ is a directed cycle formed by edges of type {\rm II};
\item[(2)] for every edge of type {\rm II} there are precisely two vertices $v$ and $v'$ of type {\rm I} such that 
this edge is contained in the cycles $C(v)$ and $C(v')$.
\end{enumerate}
Similarly, for every edge $e$ of type I we take a face containing $e$;
this face contains an edge of type II which implies that $e$ connects a vertices of different types.  

Consider $\Gamma_{II}$. 
Observe that any two vertices of type II in $\Gamma$ can be connected by a path formed by edges of type II
which means that $\Gamma_{II}$ is connected.
It is easy to see that $\Gamma_{II}$ is a $2$-cell embedding of a simple digraph 
such that every face is the directed cycle $C(v)$ for a certain vertex $v$ of type I;
in particular, this $2$-cell embedding is closed. 
Lemma 1 implies that $\Gamma_{II}$ is an Eulerian digraph. 
The equality $\Gamma={\rm T}(\Gamma_{II})$ is obvious.

The following remark will be used to prove the second part of the theorem.
The conditions (1) and (2) guarantee that every zigzag of $\Gamma$ containing an edge of type II is homogeneous.
Recall that the number of edges of type I is the twofold number of edges of type II.
This implies that there is no zigzag containing edges of type I only
(since every edge occurs twice in a unique zigzag from $\tau$ or it occurs ones in precisely two distinct zigzags from $\tau$).
Therefore, every zigzag of $\Gamma$ is homogeneous if (1) and (2)  hold.
 
 (II). Suppose that $\Gamma'$ is a closed $2$-cell embedding of a simple Eulerian digraph such that every face is a directed cycle.
 
Let $e_1,\dots,e_n$ be the directed cycle formed by all edges of a certain face of $\Gamma'$.
For every $i\in \{1,\dots,n\}$ we define $j(i)=i+2({\rm mod}\, n)$ and
denote by $e'_i$ and $e''_{i}$ the edges containing the vertex $v_{F}$ and intersecting $e_{i}$ and $e_{j(i)}$, respectively.
Consider the zigzag of ${\rm T}(\Gamma')$ which contains the sequence $e_{i},e'_{i},e''_{i}, e_{j(i)}$. 
It passes through $e_{i}$ and $e_{j(i)}$ according to the directions of these edges;
and the same holds for every edge of $\Gamma'$ which occurs in this zigzag.
Such a zigzag exists for any pair formed by a face of $\Gamma'$ and an edge on this face. 
The collection of all such zigzags is a $z$-orientation of ${\rm T}(\Gamma')$ with the following properties: 
all edges of $\Gamma'$ are of type II and every $v_F$ is a vertex of type I. 
This implies that ${\rm T}(\Gamma')$ satisfies the conditions (1) and (2) which gives the claim.
\end{proof}

\begin{exmp}{\rm
If $BP_n$ is as in Example 5, then
only $a$ and $b$ are vertices of type I and $C(a)=C(b)$ is the directed cycle formed by the edges of the base of the bipyramid. 
Conversely, if all zigzags of $\Gamma$ are homogeneous and there are precisely two vertices of type I,
then $\Gamma$ is a bipyramid (easy verification). 
}\end{exmp}

\begin{exmp}{\rm
As in Example 6, we assume that $\Gamma'$ is a triangulation of $M$.
If $\tau'$ is a $z$-orientation of $\Gamma'$ such that all faces are of type II
(see \cite[Example 4]{PT3} for a $z$-knotted triangulation of ${\mathbb S}^2$ whose faces are of type II),
then $t(\tau')$ is the $z$-orientation of ${\rm T}(\Gamma')$ described in the second part of Theorem 1.
Conversely, if all zigzags of $\Gamma$ are homogeneous and for every vertex $v$ of type I the cycle $C(v)$ is a triangle,
then $\Gamma_{II}$ is a triangulation of $M$ and all faces in this triangulation are of type II 
for the $z$-orientation $\tau'$ satisfying $\tau=t(\tau')$
(recall that the triangulation $\Gamma$ is considered together with fixed  $z$-orientation $\tau$).
}\end{exmp}

\section{Gluing of triangulations}
In this section, we describe how two triangulations of special types can be glued together to get a 
$z$-knotted triangulation with homogeneous zigzags.

Let $\Gamma$ be a triangulation with fixed $z$-orientation.
The {\it face shadow} of a zigzag $e_{1},\dots,e_{n}$ is the cyclic sequence $F_{1},\dots,F_{n}$,
where $F_{i}$ is the face containing $e_{i}$ and $e_{i+1}$ if $i<n$ and the face $F_{n}$ contains $e_{n}$ and $e_{1}$.
We will use the following fact (whose proof is a simple verification):
if $e$ is an edge of type II which occurs in a certain zigzag of $\Gamma$ twice and $F,F'$ are the faces containing $e$, 
then the face shadow of this zigzag contains each of the sequences $F,F'$ and $F',F$ ones.

From this moment, we suppose that $\Gamma$ is a $z$-knotted triangulation of a surface $M$
such that all faces are of type I and the zigzag is homogeneous.
Let $e_{1}$ and $e_{2}$ be edges of type II in $\Gamma$ with a common vertex $v$ and such that
the zigzag of $\Gamma$ is a cyclic sequence of type
$$e_{1},\dots,e_{2},\dots,e_{1}, \dots,e_{2}, \dots;$$
in what follows, every such pair of edges will be called {\it special}.
Let also $F^{-}_{i}$ and $F^{+}_{i}$ be the faces containing $e_i$.
These four faces are mutually distinct (since each face contains only one edge of type II).
Every zigzag is a cyclic sequence and we can start from any of the four times when $e_{i}$  occurs in the zigzag.
Therefore, we can assume that the face shadow of the zigzag of $\Gamma$ is 
$$F^{-}_{1},F^{+}_{1},\dots, F^{-}_{2},F^{+}_{2},\dots, F^{+}_{1},F^{-}_{1},\dots, F^{+}_{2},F^{-}_{2},\dots$$
and the faces $F^{-}_{1},F^{-}_{2}$ are on the same side of $e_{1}\cup e_{2}$ and the faces $F^{+}_{1},F^{+}_{2}$ are on the other side,
see Fig.6(a).
Denote by $v_{i}$ the vertex of $e_{i}$ distinct from $v$.

Let $\Gamma'$ be a triangulation of a surface $M'$ which contains precisely two zigzags (up to reversing) and admits 
a $z$-orientation such that all faces are of type I and the zigzags are homogeneous. 
Consider edges $e'_{1}$ and $e'_{2}$ of type II in $\Gamma'$ with a common vertex $v'$ and such that
$e'_{1}$ occurs twice in one of the zigzags of $\Gamma'$ and $e'_{2}$ occurs twice in the other.
Denote by $F'^{-}_{i}$ and $F'^{+}_{i}$ the faces containing $e'_i$. 
These four faces are mutually distinct.
The shadows of the zigzags are cyclic sequences
$$F'^{-}_{1},F'^{+}_{1},\dots, F'^{+}_{1},F'^{-}_{1},\dots\;\mbox{ and }\;F'^{-}_{2},F'^{+}_{2},\dots, F'^{+}_{2},F'^{-}_{2},\dots;$$
as above, without loss of generality we can assume that  
$F'^{-}_{1},F'^{-}_{2}$ are on the same side of $e'_{1}\cup e'_{2}$ and the faces $F'^{+}_{1},F'^{+}_{2}$ are on the other side.
Let $v'_{i}$ be the vertex of $e'_{i}$ distinct from $v'$.

Our last assumption is the following:
\begin{enumerate}
\item[(*)]
for every $i\in \{1,2\}$ the edge $e'_{i}$ enters or leaves the vertex $v'$ if and only if the edge $e_{i}$ enters or leaves the vertex $v$, respectively.
\end{enumerate}
Now, we construct a triangulation of the connected sum $M\# M'$
by rearranging the triangulations $\Gamma,\Gamma'$ and gluing them together.

In the triangulation $\Gamma$,  we split up each $e_{i}$ in two edges $e^{+}_{i}$ and $e^{-}_{i}$ such that 
$v_{1}$ belongs to $e^{+}_{1}, e^{-}_{1}$ and $v_{2}$  belongs to $e^{+}_{2},e^{-}_{2}$.
Also, the vertex $v$ is splitted  in two vertices $v^{+}$ and $v^{-}$ such that 
$v^{\delta}$, $\delta\in \{+,-\}$ is a common vertex of the edges $e^{\delta}_{1}$ and $e^{\delta}_{2}$, see Fig.6(b).
The vertex of the face $F^{\delta}_{i}$ which does not belong to $e_{i}$ will be connected with $v^{\delta}$.
Similarly, we take a vertex of $\Gamma$ connected with $v$ by a certain edge $e$ and join it  with $v_{\delta}$ if the edge $e$ and the faces $F^{\delta}_{i}$ 
are on the same side of  $e_{1}\cup e_{2}$, see Fig.6(b).
In this way, we get a graph $\Gamma_{new}$ embedded in $M$.
The face of $\Gamma_{new}$ bounded by the edges $e^{+}_{1},e^{-}_{1},e^{+}_{2},e^{-}_{2}$ will be denoted by $F$.
We repeat the above construction for the triangulation $\Gamma'$ and obtain a graph $\Gamma'_{new}$ embedded in $M'$.
As above, every $e'_{i}$ is splitted in two edges $e'^{+}_{i},e'^{-}_{i}$
and the face bounded by $e'^{+}_{1},e'^{-}_{1},e'^{+}_{2},e'^{-}_{2}$ is denoted by $F'$. 
We remove the interiors of $F$ and $F'$ from $M$ and $M'$ (respectively)  and
glue together $e^{\delta}_{i}$ and $e'^{\delta}_{i}$ for $i\in \{1,2\}$ such that $v_{1}$ is identified with $v'_{1}$ and $v_{2}$ with $v'_{2}$.
We get a triangulation of $M\# M'$ which will be denoted by ${\rm G}(\Gamma, \Gamma')$.
\begin{center}
\begin{tikzpicture}[scale=1]

\begin{scope}[xshift=0.5cm]
\draw[fill=black] (0,0) circle (2pt);

\draw[fill=black] (2,0) circle (2pt);
\draw[fill=black] (-2,0) circle (2pt);
\draw[fill=black] (0,2) circle (2pt);
\draw[fill=black] (0,-2) circle (2pt);

\draw[fill=black] (-1,1.7) circle (2pt);
\draw[fill=black] (1,1.7) circle (2pt);
\draw[fill=black] (-1,-1.7) circle (2pt);
\draw[fill=black] (1,-1.7) circle (2pt);

\draw[thick] (0,0) -- (0,2);
\draw[thick] (0,0) -- (0,-2);
\draw[thick] (0,0) -- (2,0);
\draw[thick] (0,0) -- (-2,0);

\draw[thick] (-1,1.7) -- (-2,0);
\draw[thick] (-1,1.7) -- (0,0);
\draw[thick] (1,1.7) -- (2,0);
\draw[thick] (1,1.7) -- (0,0);
\draw[thick] (-1,-1.7) -- (-2,0);
\draw[thick] (-1,-1.7) -- (0,0);
\draw[thick] (1,-1.7) -- (2,0);
\draw[thick] (1,-1.7) -- (0,0);

\node at (0.3,0.13) {$v$};

\node at (-1,-0.2) {$e_1$};
\node at (1,-0.2) {$e_2$};

\node at (-1,0.66) {$F^-_1$};
\node at (1,0.66) {$F^-_2$};

\node at (-1,-0.7) {$F^+_1$};
\node at (1,-0.7) {$F^+_2$};
\end{scope}

\begin{scope}[xshift=-0.5cm]
\draw[fill=black] (8,0) circle (2pt);
\draw[fill=black] (4,0) circle (2pt);
\draw[fill=black] (6,2) circle (2pt);
\draw[fill=black] (6,-2) circle (2pt);

\draw[fill=black] (5,1.7) circle (2pt);
\draw[fill=black] (7,1.7) circle (2pt);
\draw[fill=black] (5,-1.7) circle (2pt);
\draw[fill=black] (7,-1.7) circle (2pt);

\draw[fill=black] (6,0.4) circle (2pt);
\draw[fill=black] (6,-0.4) circle (2pt);

\draw[thick] (6,0.4) -- (6,2);
\draw[thick] (6,-0.4) -- (6,-2);

\draw[thick] (5,1.7) -- (4,0);
\draw[thick] (5,1.7) -- (6,0.4);
\draw[thick] (7,1.7) -- (8,0);
\draw[thick] (7,1.7) -- (6,0.4);
\draw[thick] (5,-1.7) -- (4,0);
\draw[thick] (5,-1.7) -- (6,-0.4);
\draw[thick] (7,-1.7) -- (8,0);
\draw[thick] (7,-1.7) -- (6,-0.4);

\draw [xshift=4cm, thick] plot [smooth, tension=2] coordinates { (0.05,0) (2,0.4) (3.95,0)};
\draw [xshift=4cm, thick] plot [smooth, tension=2] coordinates { (0.05,0) (2,-0.4) (3.95,0)};

\node at (1,-2.4) {$(a)$};
\node at (6,-2.4) {$(b)$};

\node at (6,-0.15) {$v^+$};
\node at (6,0.2) {$v^-$};

\node at (5.2,-0.14) {$e^+_1$};
\node at (6.75,-0.14) {$e^+_2$};

\node at (4.8,0.12) {$e^-_1$};
\node at (7.2,0.12) {$e^-_2$};

\node at (5,0.8) {$F^-_1$};
\node at (7,0.8) {$F^-_2$};

\node at (5,-0.85) {$F^+_1$};
\node at (7,-0.85) {$F^+_2$};
\end{scope}

\end{tikzpicture}
\captionof{figure}{ }
\end{center}

\begin{lemma}\label{lemma-gluing}
${\rm G}(\Gamma, \Gamma')$ is a $z$-knotted triangulation, where all faces are of type {\rm I} and the zigzag is homogeneous. 
It contains precisely $m+m'$ vertices of type {\rm I}, 
where $m$ and $m'$ are the numbers of vertices of type {\rm I} in $\Gamma$ and $\Gamma'$, respectively.
\end{lemma}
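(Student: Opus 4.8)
The plan is to determine the zigzags of $\mathrm{G}(\Gamma,\Gamma')$ explicitly, by following how the unique zigzag $Z$ of $\Gamma$ and the two zigzags $W_{1},W_{2}$ of $\Gamma'$ are cut and re-joined by the surgery. That $\mathrm{G}(\Gamma,\Gamma')$ is a triangulation of $M\#M'$ is almost immediate: the surface is $M$ and $M'$ with the open discs bounded by $F$ and $F'$ removed and the two boundary quadrilaterals identified, which is precisely the connected sum; every face other than $F,F'$ is a triangle and survives while $F,F'$ disappear; the graph is connected, and inspecting the local picture in Fig.~6 shows that no loop or multiple edge is created. The relevant $z$-orientation of $\mathrm{G}(\Gamma,\Gamma')$ is the one induced by $\tau$ and by the given $z$-orientation of $\Gamma'$: once the zigzag is found below, it is traversed on each of the arcs below exactly as $Z$, $W_{1}$ or $W_{2}$ is, so every edge of $\Gamma$ or $\Gamma'$ distinct from $e_{i},e'_{i}$ keeps its type and direction (the re-routing of the edges at $v$ onto $v^{+}$ and $v^{-}$ moves only type I edges), and condition $(*)$ makes the two directions inherited by the glued edge $e^{\delta}_{i}=e'^{\delta}_{i}$ (from $e_{i}$ and from $e'_{i}$) agree.

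For the splicing, cut $Z$ at its four occurrences of $e_{1},e_{2}$. By the normalisation of the face shadow of $Z$ fixed before the construction, these occur cyclically with sides $F^{-}_{1}\!\to\!F^{+}_{1}$, $F^{-}_{2}\!\to\!F^{+}_{2}$, $F^{+}_{1}\!\to\!F^{-}_{1}$, $F^{+}_{2}\!\to\!F^{-}_{2}$, so $Z$ decomposes into four arcs $a_{1},a_{2},a_{3},a_{4}$, each of which starts and ends at an occurrence of a split edge $e^{\delta}_{i}$. Similarly, since $W_{1}$ does not meet $e'_{2}$, cutting $W_{1}$ at its two occurrences of $e'_{1}$ produces arcs $\beta_{1},\beta_{2}$, where $\beta_{1}$ runs between occurrences of $e^{+}_{1}$ and $\beta_{2}$ between occurrences of $e^{-}_{1}$; cutting $W_{2}$ at its two occurrences of $e'_{2}$ gives $\gamma_{1},\gamma_{2}$ running between occurrences of $e^{+}_{2}$, resp.\ $e^{-}_{2}$. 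At each edge $e^{\delta}_{i}$ there are exactly two occurrences, one passing from the $\Gamma$-face $F^{\delta}_{i}$ to the $\Gamma'$-face $F'^{\delta}_{i}$ and one in the reverse direction; determining, for each $i$ and $\delta$, which $\Gamma$-arc hands over to which $\Gamma'$-arc (this is where the side conventions and $(*)$ enter) one finds that the eight arcs assemble into the single cyclic sequence
$$a_{1},\ \gamma_{2},\ a_{4},\ \beta_{2},\ a_{3},\ \gamma_{1},\ a_{2},\ \beta_{1},$$
the consecutive arcs being joined, in order, through $e^{-}_{2},e^{-}_{2},e^{-}_{1},e^{-}_{1},e^{+}_{2},e^{+}_{2},e^{+}_{1},e^{+}_{1}$. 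Since $a_{1}\cup\dots\cup a_{4}$ is $Z$ with its four occurrences of $e_{1},e_{2}$ removed, $\beta_{1}\cup\beta_{2}$ is $W_{1}$ with the occurrences of $e'_{1}$ removed, $\gamma_{1}\cup\gamma_{2}$ is $W_{2}$ with the occurrences of $e'_{2}$ removed, and the eight removed occurrences are exactly replaced by the eight occurrences of $e^{\pm}_{1},e^{\pm}_{2}$ used in the joins, this sequence uses up every edge-occurrence of $\mathrm{G}(\Gamma,\Gamma')$. Hence $\mathrm{G}(\Gamma,\Gamma')$ has a single zigzag up to reversing.

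Next, each arc $a_{j},\beta_{j},\gamma_{j}$ is a sub-path of a homogeneous zigzag and each of the eight edges $e^{\pm}_{i}$ inserted at a join is a type II edge lying on the two type I faces $F^{\delta}_{i}$ and $F'^{\delta}_{i}$, so in the spliced sequence every $e^{\pm}_{i}$ is flanked on each side by exactly two type I edges; thus between consecutive type II edges there are exactly two type I edges, i.e.\ the zigzag is homogeneous. By Proposition~\ref{prop-or} applied to $\mathrm{G}(\Gamma,\Gamma')$ (whose edge types are $z$-orientation independent, being $z$-knotted) every face then contains a type II edge, so all faces are of type I. Finally, the only vertices touched by the surgery are $v$ (split into $v^{+}=v'^{+}$ and $v^{-}=v'^{-}$), $v_{1}=v'_{1}$ and $v_{2}=v'_{2}$; each is incident to the type II edges $e^{\pm}_{1},e^{\pm}_{2}$, hence stays of type II, while every other vertex of $\mathrm{G}(\Gamma,\Gamma')$ is a vertex of $\Gamma$ or of $\Gamma'$ incident to the same type I / type II edges as before and keeps its type. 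Therefore the vertices of type I of $\mathrm{G}(\Gamma,\Gamma')$ form the disjoint union of those of $\Gamma$ and those of $\Gamma'$, which gives $m+m'$.

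The step I expect to be the real obstacle is the bookkeeping in the splicing: correctly matching, for each of the four edges $e^{\pm}_{i}$, which of the two $\Gamma$-arcs continues into which of the two $\Gamma'$-arcs, and thereby verifying that the eight arcs form one cycle rather than two or more. This is exactly where all the hypotheses are needed — that $(e_{1},e_{2})$ is a special pair (forcing the interleaved occurrence pattern of $e_{1},e_{2}$ in $Z$), that $e'_{1}$ and $e'_{2}$ lie in different zigzags of $\Gamma'$, the side conventions for $F^{\pm}_{i}$ and $F'^{\pm}_{i}$, and condition $(*)$ — whereas the remaining verifications are routine once this combinatorics is pinned down. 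A minor additional nuisance is checking that $\mathrm{G}(\Gamma,\Gamma')$ is a simple graph, which can be read off from Fig.~6.
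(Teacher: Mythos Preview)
Your proposal is correct and follows essentially the same route as the paper: decompose $Z$ into four arcs $A^{+}_{1},A^{+}_{2},A^{-}_{1},A^{-}_{2}$ (your $a_{1},\dots,a_{4}$) and each $W_{i}$ into two arcs $B^{+}_{i},B^{-}_{i}$ (your $\beta_{j},\gamma_{j}$), then splice them along the $e^{\pm}_{i}$ into a single cyclic sequence---your displayed cycle, with the joining edges inserted, is exactly the paper's formula \eqref{eq1}. The only cosmetic difference is that the paper isolates the ``handing over'' step as a separate one-line Lemma (two zigzag fragments $\dots,p_{t},e$ and $e,q_{1},\dots$ concatenate provided $p_{t}$ and $q_{1}$ lie in distinct faces) and then tracks, via the face shadows, exactly which face each arc starts and ends in; this is precisely the bookkeeping you flag as the real obstacle, and the paper carries it out rather than merely asserting the outcome.
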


We will use the following simple lemma to prove Lemma \ref{lemma-gluing}.

\begin{lemma}\label{gl-0}
If sequences $p_{1},\dots,p_{t},e$ and $e,q_{1},\dots,q_{s}$
are parts of zigzags in a certain triangulation and the edges $p_{t}$ and $q_{1}$ belong to distinct faces,
then the sequence 
$$p_{1},\dots,p_{t},e,q_{1},\dots,q_{s}$$
is a part of a zigzag in this triangulation.
\end{lemma}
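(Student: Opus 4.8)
The plan is to reduce the whole statement to one local check at the junction edge $e$. The key observation, coming from the two facts quoted just above the lemma (a zigzag is completely determined by any pair of its consecutive edges, and any two distinct edges lying on a common face occur consecutively, in either order, in a unique zigzag), is the following: a finite sequence of edges $f_1,\dots,f_m$ (with $m\ge 3$) is a part of a zigzag if and only if each triple of consecutive edges $f_i,f_{i+1},f_{i+2}$ forms a valid zigzag triple, i.e. satisfies the two bullet conditions of the definition of a zigzag. Indeed, one extends the pair $f_1,f_2$ forward inside the unique zigzag it determines; at each step the next edge is forced, and it is exactly the unique edge completing the current pair to a valid triple (concretely: the face of $f_{i+1}$ other than the one carrying $f_i$ and $f_{i+1}$ has, besides $f_{i+1}$, a single edge missing the vertex $f_i\cap f_{i+1}$, and that edge is the forced continuation). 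So the assumed triple conditions make this forward extension pass through $f_3,f_4,\dots,f_m$, and the sequence sits inside that zigzag. (The pair $f_1,f_2$ occurs at most once per period of a zigzag, by minimality of the period, so there is no ambiguity about where it occurs.)

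I would now apply this criterion to the concatenated sequence $p_1,\dots,p_t,e,q_1,\dots,q_s$. Every triple of consecutive edges that lies entirely inside $p_1,\dots,p_t,e$ is valid because this sequence is a part of a zigzag, and likewise every triple lying entirely inside $e,q_1,\dots,q_s$; the only triple not covered is $p_t,e,q_1$. For it: $p_t$ and $e$ are distinct edges of a common face and $e$ and $q_1$ are distinct edges of a common face, since in each of the two given sequences these are consecutive edges; these two faces are distinct, because if they coincided this face would contain both $p_t$ and $q_1$, against the hypothesis that $p_t$ and $q_1$ belong to distinct faces; and $p_t$ and $q_1$ are non-intersecting, again by that hypothesis. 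Thus $p_t,e,q_1$ is a valid zigzag triple, so by the criterion the whole concatenation is a part of a zigzag. The extreme cases $t=1$ or $s=1$ require no change, the relevant consecutive pair in the criterion being $p_1,e$ and the forward extension past $q_1$ being empty, respectively.

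I do not expect a genuine obstacle; the argument is essentially unwinding the definition. The one point to handle with care is the twofold use of the hypothesis on $p_t$ and $q_1$: it must guarantee not only that the face of $p_t,e$ differs from the face of $e,q_1$, but also, and this is the slightly stronger part, that $p_t$ and $q_1$ share no vertex -- a zigzag triple requires its two outer edges to be disjoint, and ``no common face'' alone would not force ``no common vertex'' in a general triangulation. So one should read the hypothesis in the strong sense; in the setting where the lemma is applied, namely gluing $\Gamma$ and $\Gamma'$ along a face with $p_t$ and $q_1$ on opposite sides of the cut, this disjointness is manifest. The only other thing needing a line of justification is the backward implication of the criterion of the first paragraph -- that the continuation of a consecutive pair in a zigzag is indeed pinned down by the local triple condition -- which is precisely where the two quoted facts enter.
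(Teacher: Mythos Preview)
Your proposal is correct and is exactly the natural unpacking of the paper's own proof, which reads in full ``Easy verification.'' Your reduction to the single junction triple $p_t,e,q_1$ and the local criterion (a finite sequence is a zigzag fragment iff every consecutive triple satisfies the two defining bullets) is precisely what that verification amounts to; your side remark that the hypothesis must be read strongly enough to yield $p_t\cap q_1=\emptyset$ is apt and does not affect the intended applications.
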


\begin{proof}
Easy verification.
\end{proof}

\begin{proof}[Proof of Lemma \ref{lemma-gluing}]
The zigzag of $\Gamma$ can be considered as a cyclic sequence 
$$e_{1},A^{+}_{1},e_{2},A^{+}_{2},e_{1},A^{-}_{1},e_{2},A^{-}_{2},$$
where each $A^{\delta}_{i}$, $\delta\in \{+,-\}$ is a sequence of edges.
The face shadow of this zigzag is a cyclic sequence
$$F^{-}_{1},F^{+}_{1},\dots,F^{-}_{2},F^{+}_{2},\dots, F^{+}_{1},F^{-}_{1},\dots,F^{+}_{2},F^{-}_{2},\dots$$
and we can assume that the first edge from $A^{+}_{1}$ belongs to $F^{+}_{1}$.
Then the last edge from $A^{+}_{1}$ is contained in $F^{-}_{2}$.
The first and the last edges from $A^{+}_{2}$ belong to $F^{+}_{2}$ and $F^{+}_{1}$ (respectively).
The first edge from $A^{-}_{1}$ is contained in $F^{-}_{1}$ and the last edge is in $F^{+}_{2}$.
The first and the last edges from $A^{-}_{2}$ belong to $F^{-}_{2}$ and $F^{-}_{1}$ (respectively).

Similarly, the zigzags of $\Gamma'$ are cyclic sequences 
$$e'_{1},B^{+}_{1},e'_{1},B^{-}_{1}\;\mbox{ and }\;e'_{2},B^{+}_{2},e'_{2},B^{-}_{2},$$
where all $B^{\delta}_{i}$ are sequences of edges.
The corresponding face shadows are cyclic sequences
$$F'^{-}_{1},F'^{+}_{1},\dots,F'^{+}_{1},F'^{-}_{1},\dots\;\mbox{ and }\;F'^{-}_{2},F'^{+}_{2},\dots,F'^{+}_{2},F'^{-}_{2}\dots,$$
and we suppose that the first edge from $B^{+}_{i}$ belongs to $F'^{+}_{i}$. 
Then the last edge from $B^{+}_{i}$ also belongs $F'^{+}_{i}$.
The first and the last edges from $B^{-}_{i}$ belong to $F'^{-}_{i}$.

The above observations show the following sequences are parts of zigzags in the triangulation ${\rm G}(\Gamma, \Gamma')$:
\begin{enumerate}
\item[$\bullet$] $e^{\delta}_{1},A^{\delta}_{1},e^{-\delta}_{2}$ with $\delta\in \{+,-\}$
($-+=-$ and $--=+$),
\item[$\bullet$] $e^{\delta}_{2},A^{\delta}_{2},e^{\delta}_{1}$,
\item[$\bullet$] $e^{\delta}_{i},B^{\delta}_{i},e^{\delta}_{i}$ with $\delta\in \{+,-\}$.
\end{enumerate}
Lemma \ref{gl-0} implies that the cyclic sequence 
\begin{equation}\label{eq1}
e^{+}_{1},A^{+}_{1},e^{-}_{2},B^{-}_{2},e^{-}_{2},A^{-}_{2}, e^{-}_{1},B^{-}_{1},e^{-}_{1},A^{-}_{1},e^{+}_{2},B^{+}_{2},e^{+}_{2},A^{+}_{2},e^{+}_{1},B^{+}_{1}
\end{equation}
is a zigzag of ${\rm G}(\Gamma, \Gamma')$.

This zigzag contains all $A^{\delta}_{i}$ and $B^{\delta}_i$.
Also, each $e^{\delta}_{i}$ occurs in the zigzag  twice. 
So, our zigzag passes through every edge of ${\rm G}(\Gamma, \Gamma')$ twice
which means that this triangulation is $z$-knotted.

Since the zigzag of ${\rm G}(\Gamma, \Gamma')$ is the cyclic sequence \eqref{eq1},
every edge of type I from $\Gamma$ or $\Gamma'$ corresponds to an edge of type I in ${\rm G}(\Gamma, \Gamma')$.
Therefore, every face of ${\rm G}(\Gamma, \Gamma')$ contains two edges of type I, i.e. it is a face of type I.
The remaining edges of ${\rm G}(\Gamma, \Gamma')$ are of type II which implies that the zigzag of ${\rm G}(\Gamma, \Gamma')$
is homogeneous. 
Also, a vertex of ${\rm G}(\Gamma, \Gamma')$ is of type I if and only if it corresponds to a vertex of type I  in $\Gamma$ or $\Gamma'$. 
\end{proof}

\begin{rem}\label{rem2}{\rm
Keeping the above notations, we consider the case when $\Gamma'$ contains precisely four zigzags $Z^{+}_{1},Z^{-}_{1},Z^{+}_{2},Z^{-}_{2}$
(up to reversing)  and every $e'_{i}$ occurs in each of the zigzags $Z^{+}_{i},Z^{-}_{i}$ once. 
We suppose that 
the face shadow of $Z^{+}_{i}$ contains the sequence $F'^{-}_{i},F'^{+}_{i}$
and the face shadow of $Z^{-}_{i}$ contains the reversed sequence $F'^{+}_{i},F'^{-}_{i}$.
We remove from these zigzags the edge $e'_{i}$ and 
obtain two sequences of edges which will be denoted by $C^{+}_{i}$ and $C^{-}_{i}$, respectively.
For every $\delta\in \{+,-\}$ the first edge from $C^{\delta}_{i}$ belongs to $F'^{\delta}_{i}$
and the last edge is contained in $F'^{-\delta}_{i}$ (as above, $-+=-$ and $--=+$).
In other words, we have the sequence $e^{\delta}_{i},C^{\delta}_{i},e^{-\delta}_{i}$ instead of $e^{\delta}_{i},B^{\delta}_{i},e^{\delta}_{i}$
and the cyclic sequence 
\begin{equation}\label{eq2}
e^{+}_{1},A^{+}_{1},e^{-}_{2},C^{-}_{2},e^{+}_{2},A^{+}_{2}, e^{+}_{1},C^{+}_{1},e^{-}_{1},A^{-}_{1},e^{+}_{2},C^{+}_{2},e^{-}_{2},A^{-}_{2},e^{-}_{1},C^{-}_{1}
\end{equation}
is a zigzag of ${\rm G}(\Gamma, \Gamma')$.
It is easy to check that all statements from Lemma \ref{lemma-gluing} hold.
}\end{rem}

In that follows, we will consider the triangulation ${\rm G}(\Gamma, \Gamma')$ for the both cases when
$\Gamma'$ contains precisely two or four zigzags (up to reversing).

\section{Concordant special pairs}
Let $\Gamma,\Gamma'$ and $e_{1},e_{2},e'_{1},e'_{2}$ be as in the previous section.
The zigzag of $\Gamma$ is the cyclic sequence 
$$e_{1},A^{+}_{1},e_{2},A^{+}_{2},e_{1},A^{-}_{1},e_{2},A^{-}_{2}.$$
If $\Gamma'$ contains precisely two zigzags (up to reversing), then these zigzags are the cyclic sequences
$$e'_{1},B^{+}_{1},e'_{1},B^{-}_{1}\;\mbox{ and }\;e'_{2},B^{+}_{2},e'_{2},B^{-}_{2}.$$
In the case when $\Gamma'$ contains precisely four zigzags up to reversing (Remark \ref{rem2}),
$C^{+}_{i}$ and $C^{-}_{i}$ with $i\in\{1,2\}$ are sequences of edges obtained from these zigzags by removing  $e'_{i}$.

Recall that two distinct edges $c_{1}$ and $c_{2}$ of type II in $\Gamma$ form a {\it special pair} if
the zigzag of $\Gamma$ is a cyclic sequence of type
$$c_{1},\dots,c_{2},\dots,c_{1}, \dots,c_{2}, \dots$$
and the edges have a common vertex.

We say that special pairs $c_{1},c_{2}$ and $t_{1},t_{2}$ in $\Gamma$ are {\it concordant} 
if $c_{i}\ne t_{j}$ for any $i,j\in \{1,2\}$ and
the zigzag of $\Gamma$ is a cyclic sequence of type
$$c_{1},\dots,t_{1},\dots,c_{2},\dots, t_{2},\dots,c_{1},\dots,t_{1},\dots,c_{2},\dots, t_{2},\dots;$$
in particular, if $t_{1},t_{2}$ is a special pair concordant to the special pair $e_{1},e_{2}$, 
then $t_i$ belongs to both $A^{+}_{i}$ and $A^{-}_{i}$ for every $i\in \{1,2\}$. 
It is clear that this relation is symmetric.

\begin{lemma}\label{lemma-c1}
The following assertions are fulfilled:
\begin{enumerate}
\item[(A)] 
Every special pair $t_{1},t_{2}$ in $\Gamma$ concordant to the special pair $e_{1},e_{2}$ is a special pair in ${\rm G}(\Gamma, \Gamma')$.
\item[(B)] 
If two concordant special pairs $c_{1},c_{2}$ and $t_{1},t_{2}$ in $\Gamma$ both are concordant to the special pair $e_{1},e_{2}$, 
then they are concordant special pairs in ${\rm G}(\Gamma, \Gamma')$.
\end{enumerate}
\end{lemma}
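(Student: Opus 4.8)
The plan is to trace how the zigzag of $\Gamma$ is reorganized when we pass to ${\rm G}(\Gamma,\Gamma')$, using the explicit cyclic sequence \eqref{eq1} (or \eqref{eq2} in the four-zigzag case) from the proof of Lemma \ref{lemma-gluing}. Recall that a special pair in $\Gamma$ concordant to $e_1,e_2$ consists of edges $t_1\in A^+_1\cap A^-_1$ and $t_2\in A^+_2\cap A^-_2$; moreover, being edges of type II that are \emph{not} among $e_1,e_2$, they are not split in the gluing construction, so $t_1,t_2$ survive unchanged as edges of type II in ${\rm G}(\Gamma,\Gamma')$ with the same orientation. The first task is therefore to locate the four occurrences of each $t_i$ inside the big cyclic sequence \eqref{eq1}.

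First I would write out, schematically, where the $t_i$ sit. In $\Gamma$ the zigzag is $e_1,A^+_1,e_2,A^+_2,e_1,A^-_1,e_2,A^-_2$, so its concordant-pair pattern reads $e_1,\dots,t_1,\dots,e_2,\dots,t_2,\dots,e_1,\dots,t_1,\dots,e_2,\dots,t_2,\dots$ — here the first $t_1$ lies in $A^+_1$, the first $t_2$ in $A^+_2$, the second $t_1$ in $A^-_1$, the second $t_2$ in $A^-_2$. Now substitute into \eqref{eq1}: the blocks $A^+_1,A^-_2,A^-_1,A^+_2$ appear (in that cyclic order, interleaved with the $B$-blocks and the split edges). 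So within \eqref{eq1} we encounter, in cyclic order, the copy of $t_1$ coming from $A^+_1$, then the copy of $t_2$ from $A^-_2$, then the copy of $t_1$ from $A^-_1$, then the copy of $t_2$ from $A^+_2$. That is precisely the pattern $t_1,\dots,t_2,\dots,t_1,\dots,t_2,\dots$, and since $t_1,t_2$ share a common vertex in $\Gamma$ (a vertex that is untouched by the gluing, being distinct from $v$), $t_1,t_2$ is a special pair in ${\rm G}(\Gamma,\Gamma')$. This proves (A). For the four-zigzag variant one repeats the same reading with \eqref{eq2}, where the blocks $A^+_1,A^+_2,A^-_1,A^-_2$ occur; the cyclic order of the relevant $A$-blocks is again such that the four occurrences alternate between $t_1$ and $t_2$, so the argument is identical.

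For (B), I would do the same bookkeeping with four edges at once. Since $c_1,c_2$ and $t_1,t_2$ are both concordant to $e_1,e_2$, we have $c_i,t_i\in A^+_i\cap A^-_i$; moreover $c_1,c_2$ concordant to $t_1,t_2$ means that inside each of the two passes through $A^+_i$ (resp. $A^-_i$) the edges $c_1,t_1,c_2,t_2$ appear in a fixed cyclic pattern, say $c_1,\dots,t_1,\dots$ in the $A_1$-blocks and $c_2,\dots,t_2,\dots$ in the $A_2$-blocks (the other cyclic orderings are handled symmetrically, or normalized by relabeling). Feeding this into \eqref{eq1} and reading off the eight occurrences in cyclic order — $c_1,t_1$ (from $A^+_1$), $c_2,t_2$ (from $A^-_2$), $c_1,t_1$ (from $A^-_1$), $c_2,t_2$ (from $A^+_2$) — gives exactly the concordance pattern $c_1,\dots,t_1,\dots,c_2,\dots,t_2,\dots,c_1,\dots,t_1,\dots,c_2,\dots,t_2,\dots$ required by the definition. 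Again the four-zigzag case is the same computation with \eqref{eq2}.

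The main obstacle is purely organizational: keeping straight the many sign bookkeeping conventions ($-+=-$, $--=+$), the direction conventions on the face shadows, and the fact that in \eqref{eq1} the $A$-blocks do \emph{not} appear in their original $\Gamma$-order but in the shuffled order $A^+_1,A^-_2,A^-_1,A^+_2$. I would be careful to justify once and for all that this shuffle still preserves the \emph{cyclic} alternation of any pair of ``witnesses'' lying one in each $A_i$, which is the only combinatorial fact actually used; everything else (common vertex, type II, orientation) transfers because the edges $c_i,t_i$ are never modified by the gluing. A small extra point to check is that $c_i\ne t_j$ and $c_i,t_i\notin\{e_1,e_2\}$ persist in ${\rm G}(\Gamma,\Gamma')$, which is immediate since gluing only identifies the split copies of $e_i,e'_i$ and touches no other edge.
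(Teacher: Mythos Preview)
Your proof is correct and takes essentially the same approach as the paper's: both arguments locate the occurrences of $t_i$ (and, for part (B), $c_i$) inside the explicit cyclic sequences \eqref{eq1} or \eqref{eq2} and read off the required alternation pattern directly. The paper's version is considerably terser---it simply notes that the claim follows from \eqref{eq1}/\eqref{eq2} together with $t_i\in A^+_i\cap A^-_i$, and for (B) writes out exactly the same WLOG ordering you use---but the substance is identical.
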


\begin{proof}
(A).
This follows from the fact that the zigzag of ${\rm G}(\Gamma, \Gamma')$ is the cyclic sequence \eqref{eq1} or \eqref{eq2}
and every $t_i$ belongs to both $A^{+}_{i}$ and $A^{-}_{i}$.

(B). Without loss of generality we can assume that the zigzag of $\Gamma$ is a cyclic sequence of type
$$e_{1},\underbrace{\dots,c_{1},\dots,t_{1},\dots}_{A^{+}_{1}},e_{2},
\underbrace{\dots,c_{2},\dots, t_{2},\dots}_{A^{+}_{2}},e_{1},
\underbrace{\dots, c_{1},\dots,t_{1},\dots}_{A^{-}_{1}},e_{2},
\underbrace{\dots,c_{2},\dots, t_{2},\dots}_{A^{-}_{2}}$$
which gives the claim, since the zigzag of ${\rm G}(\Gamma, \Gamma')$ is the cyclic sequence \eqref{eq1} or \eqref{eq2}.
\end{proof}

\begin{lemma}\label{lemma-c2}
Let $c'_{1}$ and $c'_{2}$ be edges of type II in $\Gamma'$ with a common vertex 
and such that $c'_i$ belongs to both $B^{+}_{i},B^{-}_{i}$ or both $C^{+}_{i},C^{-}_{i}$ for every $i\in \{1,2\}$.
Then $c'_{1},c'_{2}$ is a special pair in ${\rm G}(\Gamma, \Gamma')$.
If $c_{1},c_{2}$ is a special pair in $\Gamma$ concordant to the special pair $e_{1},e_{2}$,
then $c_{1},c_{2}$ and $c'_{1},c'_{2}$ are concordant special pairs in ${\rm G}(\Gamma, \Gamma')$.
\end{lemma}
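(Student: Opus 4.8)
The plan is to read both assertions off the explicit cyclic sequence describing the (unique) zigzag of ${\rm G}(\Gamma,\Gamma')$ provided by Lemma~\ref{lemma-gluing} and Remark~\ref{rem2}: this is the sequence~\eqref{eq1} when $\Gamma'$ has two zigzags up to reversing, and the sequence~\eqref{eq2} when it has four. First I would check that $c'_1$ and $c'_2$ are two distinct edges of type~{\rm II} of ${\rm G}(\Gamma,\Gamma')$ having a common vertex. Since $c'_i$ lies in $B^{\pm}_i$ (resp.\ in $C^{\pm}_i$), and these blocks are parts of the zigzag(s) of $\Gamma'$ passing through $e'_i$ only, while every edge of a triangulation with a $z$-orientation occurs either twice in one zigzag or once in two distinct zigzags, we get $c'_i\notin\{e'_1,e'_2\}$; hence $c'_i$ is untouched by the gluing and, being of type~{\rm II} in $\Gamma'$, it is of type~{\rm II} in ${\rm G}(\Gamma,\Gamma')$ by the observation used in the proof of Lemma~\ref{lemma-gluing}. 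The common vertex of $c'_1,c'_2$ survives the splitting of $v'$, which is a short local verification of the same kind already implicit in the proof of Lemma~\ref{lemma-c1}(A).

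The heart of the argument is then a direct transcription of \eqref{eq1} (resp.\ of \eqref{eq2}). Because $c'_i$ is of type~{\rm II}, it occurs exactly twice among the zigzags of $\Gamma'$, so the hypothesis that it belongs to \emph{both} $B^{+}_i$ and $B^{-}_i$ (resp.\ both $C^{+}_i$ and $C^{-}_i$) forces exactly one occurrence in each of these two blocks. Substituting this into \eqref{eq1} or \eqref{eq2}, the two occurrences of $c'_1$ and the two occurrences of $c'_2$ lie in four distinct blocks which appear along the zigzag of ${\rm G}(\Gamma,\Gamma')$ in the cyclic order $c'_1,\dots,c'_2,\dots,c'_1,\dots,c'_2,\dots$; together with the previous paragraph this is precisely the definition of a special pair, so $c'_1,c'_2$ is a special pair in ${\rm G}(\Gamma,\Gamma')$. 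Although the $\pm$-labels of the blocks differ between \eqref{eq1} and \eqref{eq2}, the occurrences of $c'_1$ and $c'_2$ occupy the same four cyclic positions in both, so the two cases can be handled simultaneously.

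For the concordance assertion, assume that $c_1,c_2$ is a special pair of $\Gamma$ concordant to $e_1,e_2$. By Lemma~\ref{lemma-c1}(A) it is then a special pair in ${\rm G}(\Gamma,\Gamma')$; moreover $c_i\ne c'_j$, because the gluing only identifies split halves of $e_1,e_2$ with split halves of $e'_1,e'_2$, whereas $c_i\notin\{e_1,e_2\}$ (by the definition of concordant pairs) and $c'_j\notin\{e'_1,e'_2\}$ (by the first paragraph). As noted just after the definition of concordance, a special pair concordant to $e_1,e_2$ has its $i$-th edge in both $A^{+}_i$ and $A^{-}_i$; reading \eqref{eq1} (resp.\ \eqref{eq2}) once more, the eight occurrences of $c_1,c'_1,c_2,c'_2$ follow the cyclic order $c_1,\dots,c'_2,\dots,c_2,\dots,c'_1,\dots,c_1,\dots,c'_2,\dots,c_2,\dots,c'_1,\dots$, which after a cyclic shift is exactly the pattern required of two concordant special pairs, with $c'_1,c'_2$ playing the role of one pair and $c_1,c_2$ of the other. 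Since concordance is symmetric, $c_1,c_2$ and $c'_1,c'_2$ are concordant special pairs in ${\rm G}(\Gamma,\Gamma')$. I do not expect a genuine obstacle here: the only points needing care are the block bookkeeping just described (checking that $c_1,c'_2,c_2,c'_1$ really land in the stated four cyclic slots in both \eqref{eq1} and \eqref{eq2}) and the brief local check that the common vertex of $c'_1,c'_2$ — and, via Lemma~\ref{lemma-c1}(A), of $c_1,c_2$ — is not destroyed by the vertex splitting.
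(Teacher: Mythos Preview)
Your proposal is correct and follows exactly the same approach as the paper: the paper's entire proof is the single sentence ``As above, we use the zigzag descriptions \eqref{eq1} or \eqref{eq2},'' and your argument is a careful unpacking of precisely that. Your additional bookkeeping (that $c'_i\notin\{e'_1,e'_2\}$, that each $c'_i$ occurs once in each $\pm$-block, and that the eight occurrences in \eqref{eq1} and \eqref{eq2} sit in the same cyclic slots) is accurate and fills in details the paper leaves implicit.
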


\begin{proof}
As above, we use the zigzag descriptions \eqref{eq1} or \eqref{eq2}.
\end{proof}

\section{Tree structured $z$-knotted spherical triangulations with homogeneous zigzags}

We apply the results of Sections 4 and 5 to the bipyramids with fixed $z$-orienta\-tions such that 
all faces are of type I and all zigzags are homogeneous. 
In this case, an edge of $BP_n$ is of type II if and only if it belongs to the base of the bipyramid.
We will use the following observations:
\begin{enumerate}
\item[(1)] 
If $n$ is odd, then $BP_n$ is $z$-knotted and any two consecutive edges of the base form a special pair.
Any two special pairs $e_{1},e_{2}$ and $c_{1},c_{2}$ in the bipyramid are concordant if $e_{i}\ne c_{j}$ for any $i,j\in \{1,2\}$.
\item[(2)] 
In the case when $n=2k$ and $k$ is odd, $BP_n$ contains precisely two zigzags (up to reversing). 
If $c$ and $c'$ are consecutive edges of the base, then $c$ occurs twice in one of these zigzags and 
$c'$ occurs twice in the other.   
\item[(3)] 
Suppose that $n=2k$ and $k$ is even. Then $BP_n$ contains precisely four zigzags (up to reversing). 
If $c$ and $c'$ are consecutive edges of the base, then $c$ occurs once in two distinct zigzags and 
$c'$ occurs once in the two others.
\end{enumerate}

Let us take two consecutive edges $e_{1},e_{2}$ in the base of $BP_{2n+1}$ together with two consecutive edges $e'_{1},e'_{2}$ in the base of $BP_{2k}$
and apply Lemma \ref{lemma-gluing} or Remark \ref{rem2} to the corresponding gluing $\Gamma_{1}={\rm G}(BP_{2n+1},BP_{2k})$.
By the lemmas from the previous section and the above observations, 
any two consecutive edges $c_{1},c_{2}$ in the base of $BP_{2n+1}$  and any two consecutive edges $c'_{1},c'_{2}$ in the base of $BP_{2k}$
form special pairs in $\Gamma_{1}$ if $c_{i}\ne e_{j}$ and $c'_{i}\ne e'_{j}$ for any $i,j\in \{1,2\}$.
The set of all such pairs $c_{1},c_{2}$ and $c'_{1},c'_{2}$ will be denoted by ${\mathcal C}_1$.
Lemmas \ref{lemma-c1} and \ref{lemma-c2} guarantee that any two pairs from ${\mathcal C}_1$
are concordant if there is no edge which occurs in both the pairs.

Therefore, we can repeat the gluing construction for any pair from ${\mathcal C}_1$ and any pair of consecutive edges in the base of $BP_{2m}$
and obtain the triangulation $\Gamma_{2}={\rm G}(\Gamma_{1},BP_{2m})$.
Any two consecutive edges in the base of each of the bipyramids $BP_{2n+1},BP_{2k},BP_{2m}$ form a special pair in $\Gamma_2$
if they are not exploited for the gluing.
Let ${\mathcal C}_{2}$ be the set of all such pairs. 
As above, any two pairs from ${\mathcal C}_2$ are concordant if there is no edge which occurs in both the pairs.
So, we can repeat the gluing construction again.

\begin{rem}{\rm
If $\Gamma$ and $\Gamma'$ are $z$-knotted bipyraminds,
then ${\rm G}(\Gamma, \Gamma')$ is not $z$-knotted. 
Similarly, the gluing of $BP_{2i}$ and $BP_{2j}$ is not $z$-knotted. 
}\end{rem} 

Consider a rooted tree whose vertices are labeled by natural numbers according to the following rules:
\begin{enumerate}
\item[$\bullet$] The root is labeled by $2k+1$, where $k$ is not less than the root degree.
\item[$\bullet$] If a vertex is not the root or a leaf, then it is labeled by $2k$ such that $k$ is not less than the degree of this vertex.
\item[$\bullet$] Every leaf is labeled by an arbitrary even number not less than $4$.
\end{enumerate}
Using this tree, we construct a $z$-knotted triangulation of the sphere ${\mathbb S}^2$ 
whose faces are of type I and the zigzag is homogeneous. 

Let $v_{1},\dots,v_{m}$ be the vertices adjacent to the root. 
Suppose that these vertices are labeled by $2i_{1},\dots,2i_{m}$, respectively.
Since $k\ge m$, the bipyramid $BP_{2k+1}$ contains at least $m$ mutually concordant special pairs.
Using these pairs we glue the bipyramids $BP_{2i_{1}},\dots,BP_{2i_{m}}$ to $BP_{2k+1}$.
Next, we take one of the vertices $v_{1},\dots,v_{m}$ which is not a leaf and 
repeat the above construction for all its neighbors different from the root.
Step by step, we construct a triangulation of ${\mathbb S}^2$ such that the gluing of $BP_{2j}$ to $BP_{2i}$
corresponds to the edge connecting the vertex labeled by $2i$ with the vertex labeled by $2j$.
This triangulation is $z$-knotted and its zigzag is homogeneous. 
It must be pointed out that such a triangulation is not uniquely defined by given tree.

\section{Final remarks}

It is an open problem to describe all cases when triangulations with homogeneous zigzags are $z$-knotted.

It was observed by M. Kwiatkowski that an analogue of Lemma \ref{lemma-gluing} 
holds for the gluing by $6k+1$ or $6k+5$ ($k=0,1,\dots$) pairs of edges of type II with common vertices
(the proof is similar to the proof of Lemma \ref{lemma-gluing}, but too many complicated details appear).
Using this fact, we can obtain $z$-knotted triangulations with homogeneous zigzags for some surfaces with non-zero genus.

Also, for every surface of even Euler characteristic (not necessarily orientable) we can construct a family of $z$-knotted maps 
whose faces are triangles and the zigzags are homogeneous; 
but the corresponding graphs contain double edges, i.e. these maps are not triangulations of surfaces.

\end{document}